\newtheorem{theorem}{Theorem}
\newtheorem{corollary}[theorem]{Corollary}
\newtheorem{lemma}[theorem]{Lemma}
\newtheorem{problem}[theorem]{Problem}
\newtheorem{con}[theorem]{Conjecture}
\def\adots{\mathinner{\mkern2mu\raise0pt\hbox{.}  
\mkern2mu\raise4pt\hbox{.}\mkern1mu
\raise7pt\vbox{\kern7pt\hbox{.}}\mkern1mu}}
\newcommand{\A}{\mathcal{A}}
\newcommand{\V}{\mathcal{V}}
\date{ }
\begin{document}
\title{A note on extremal digraphs containing at most $t$ walks of length $k$ with the same endpoints}
\author{ Zhenhua Lyu\thanks{ School of Science, Shenyang Aerospace University, Shenyang, 110136, China. (lyuzhh@outlook.com)}  } \maketitle

\begin{abstract}
Let $n,k,t$ be positive integers. What is the maximum number of arcs in a digraph on $n$ vertices in which there are at most $t$ distinct walks of length $k$ with the same endpoints? In this paper, we prove that the maximum number is equal to $n(n-1)/2$ and the extremal digraph are the transitive tournaments when $k\ge n-1\ge \max\{2t+1,2\left\lceil \sqrt{2t+9/4}+1/2\right\rceil+3\}$. Based on this result, we may determine the maximum numbers and the extremal digraphs for $k\ge \max\{2t+1,2\left\lceil \sqrt{2t+9/4}+1/2\right\rceil+3\}$ and $n$ is sufficiently large, which generalises the existing results. A conjecture is also presented.

\end{abstract}

{\bf Key words:}
digraph, Tur\'an problem, walk

{\bf AMS  subject classifications:} 05C20, 05C35
\section{Introduction}
We discuss only finite simple digraphs (without multiple arcs but allowing loops). The terminology and notation is that of \cite{bm}, except as indicated. The number of the vertices of a digraph is its {\it order} and the number of the arcs its {\it size}. We abbreviate directed walks and directed cycles as walks and cycles, respectively. The {\it length} of a walk or cycle is its number of arcs. A $p$-cycle is a cycle of length $p$. Similarly, a $p$-walk is a walk of length $p$. Let $D=(\V,\A)$ be a digraph with vertex set $\V$ and arc set $\A$. The size of $D$ is denoted by $a(D)$. The outdegree and indegree of a vertex $u$, denoted by $d^+(u)$ and $d^-(u)$, is the number of arcs with tails and heads $u$, respectively. Denote by
$$N^+(u)=\{x\in \V|(u,x)\in \A\}\quad and \quad N^-(u)=\{x\in \V|(x,u)\in \A\}.$$
 For a set $X\subset \mathcal{V}$, we denote by $D[X]$ the subgraph of $D$ induced by $X$. For $u,v\in \V$, $uv$  denotes the arc from $u$ to $v$ and  the notation $u\rightarrow v$ means $uv\in \A$.

Tur\'an type problems are among the most important topics in graph theory, which concern the possible largest number of edges in graphs forbidding given subgraphs and the extremal graphs achieving that maximum number of edges.  The systematic investigation of digraph extremal problem was initiated by Brown and Harary \cite{BH}. For more details, see  \cite{BES,BS}. Given a family of digraphs $\mathscr{F}$, a digraph $D$ is said to be {\it $\mathscr{F}$-free} if $D$ contains no subgraph from $\mathscr{F}$.  Let $ex(n,\mathscr{F})$ be  the maximum size of $\mathscr{F}$-free  digraphs of order $n$ and $EX(n,\mathscr{F})$ be the set of $\mathscr{F}$-free digraphs of order $n$ with size $ex(n,\mathscr{F})$. Given two positive integers $k,t$, denote by $\mathscr{F}_{k,t}$ the family of  digraphs consisting of $t$ different walks of length $k$ with the same initial vertex and the same terminal vertex. In \cite{HL2}, the authors posed a Tur\'an type  problem as follows.
\begin{problem}\label{pro1}
Given positive integers $n,k,t$, determine $ex(n,\mathscr{F}_{k,t+1})$ and  $EX(n,\mathscr{F}_{k,t+1})$.
\end{problem}
The initial version of Problem \ref{pro1}  was posed by Zhan at a seminar in 2007, which concerned  the case $t=1$, see \cite[p.\,234]{ZH}. In the last decade,  Problem \ref{pro1} for the case $t=1$ has been completely solved by Wu \cite{WU}, by Huang and Zhan \cite{HZ1}, by Huang, Lyu and Qiao \cite{HLQ}, by Lyu \cite{lyu}, and by Huang and Lyu \cite{HL3}. For the general cases of Problem \ref{pro1}, the case $k=2$ has been studied in \cite{lyu20a}, and the case for $k\ge n-1\ge 6t+1$ has been solved in \cite{HL2}.
\begin{theorem}[\cite{HL2}]\label{th2} Let $t$ be a positive integer. For  $k\ge n-1\ge 6t+1$, a digraph $D\in EX(n,\mathscr{F}_{k,t+1})$ if and only if $D$ is a transitive tournament.
\end{theorem}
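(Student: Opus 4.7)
The strategy is to prove that any $\mathscr{F}_{k,t+1}$-free digraph $D$ on $n$ vertices with $k\ge n-1\ge 6t+1$ must be acyclic; the theorem then reduces to the elementary fact that among acyclic digraphs on $n$ vertices, the transitive tournament is the unique one achieving $n(n-1)/2$ arcs. The easy half is that a transitive tournament is indeed $\mathscr{F}_{k,t+1}$-free for $k\ge n-1$: being acyclic, its only walks of length at least $n-1$ are the single Hamilton path realised once between the source and the sink, so there is at most one walk of length $k$ between any pair of vertices.

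To establish acyclicity I would argue by contradiction using a shortest cycle $C$ of length $\ell$ in $D$. If $\ell=1$, i.e.\ $C$ is a loop at some vertex $v$, then any walk from $u$ to $v$ of length $j\le k$ extends to a walk of length $k$ by appending $k-j$ repetitions of the loop, and the extensions coming from different values of $j$ are pairwise distinct. Hence as soon as some vertex $u$ admits walks of $t+1$ distinct lengths into $v$, we obtain $t+1$ walks of length $k$ with common endpoints $(u,v)$. The hypothesis $n-1\ge 6t+1$ combined with a pigeonhole argument on the layered reachability from $N^-(v)$ to $v$ yields such a $u$. If $\ell\ge 2$, let $S$ denote the strong component of $D$ containing $C$. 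Minimality of $\ell$ forbids short chords in $D[S]$, pinning down the structure of $S$; meanwhile a walk of length $k$ entering $S$, looping $C$ several times, and leaving can be varied independently at each lap, producing many walks with the same endpoints whenever $k$ admits $t+1$ full laps.

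The main obstacle is the case $\ell\ge 2$ with $|S|$ of moderate size, where one must trade off two competing contributions: arcs inside $S$ that generate walks via iteration of $C$, and arcs outside $S$ supplying the endpoints of such walks. A clean way to organise the bookkeeping is to pass to the condensation of $D$ and show that each non-trivial strong component both contributes few arcs and forces missing arcs in the surrounding structure, so that the total deficit with respect to $n(n-1)/2$ is strictly positive. The slack $n-1\ge 6t+1$ is exactly what is needed to guarantee $t+1$ independent choices along a walk traversing $C$, and sharpening this slack is precisely the aim of the present paper.
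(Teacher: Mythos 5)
This statement is quoted from \cite{HL2}; the present paper does not reprove it, but the machinery it develops for Theorem \ref{th1} (which strengthens Theorem \ref{th2}) shows what a complete argument must contain. Your easy direction is fine: in an acyclic digraph every walk has distinct vertices, so for $k\ge n-1$ a transitive tournament admits at most one $k$-walk between any two vertices, and among acyclic digraphs only the transitive tournament attains $n(n-1)/2$ arcs. The hard direction is where the gap lies. Your opening claim, that every $\mathscr{F}_{k,t+1}$-free digraph under these hypotheses is acyclic, is false: a single $2$-cycle, or a long induced cycle, is $\mathscr{F}_{k,2}$-free for every $k$. What must actually be shown is that a free digraph with at least $n(n-1)/2$ arcs is acyclic, and every step of that argument has to invoke the arc count; your sketch of the loop case (``pigeonhole on layered reachability from $N^-(v)$'') and of the case $\ell\ge 2$ (``varied independently at each lap'') never brings the density in, so neither can be forced to produce the required $t+1$ walks.

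The condensation/deficit plan in your last paragraph also fails on a concrete example. Take a $3$-cycle on $\{a,b,c\}$, a transitive tournament on the remaining $n-3$ vertices, and all arcs from $\{a,b,c\}$ to the rest. This digraph has exactly $\binom{3}{2}+3(n-3)+\binom{n-3}{2}=n(n-1)/2$ arcs, so there is no strictly positive deficit to be found: a non-trivial strong component need not cost any arcs. The only way to exclude it is to exhibit $t+1$ distinct $k$-walks with common endpoints, by sliding the moment at which a walk leaves the $3$-cycle and enters the transitive part; this is precisely the content of Lemma \ref{le13}, and of Lemmas \ref{le12} and \ref{le1} for joint cycles and for cycles joined by an arc. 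It is this counting --- roughly $k\ge tL+1$ with $L=\mathrm{lcm}(m_1,m_2)$ bounded because a vertex of degree at least $n$ must lie on a loop or a $2$-cycle --- that produces the threshold $6t+1$ in the statement. Without these lemmas, or substitutes for them, the proposal does not prove the theorem. A smaller point: when you append copies of the loop at $v$ to walks of different lengths into $v$, the resulting $k$-walks are pairwise distinct only if the original walks do not themselves use the loop; this needs a sentence.
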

\noindent We define $z(t)$ as the smallest integer such that if $k\ge n-1\ge z(t)$, then $D\in  EX(n,\mathscr{F}_{k,t+1})$ if and only if $D$ is a transitive tournament. Huang and Zhan \cite{HZ1} proved that $z(1)=4$. It follows from Theorem \ref{th2} that $z(t)$ is well defined for each positive integer $t$ and
\begin{equation*}
z(t)\le 6t+1.
\end{equation*} 
Based on this fact, using induction on $n$, Lyu \cite{lyu21} obtained the following result.
\begin{theorem}\label{th3}
Let $k,n,t$ be positive integers with $k\ge 6t+1$ and $n\ge k+5+\lfloor\log_{2}(t)\rfloor$. Then $D\in EX(n,\mathscr{F}_{k,t+1})$ if and only if $D$ is an balanced blow-up of the transitive tournament of order $k$.
\end{theorem}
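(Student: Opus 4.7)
The plan is to prove Theorem \ref{th3} by induction on $n$, with base case $n_0 := k+5+\lfloor\log_2 t\rfloor$. Throughout, I write $B(n)$ for the size of the balanced blow-up of the transitive tournament $T_k$ on $n$ vertices; with $n = qk + r$ and $0 \le r < k$, one computes
\[
B(n) \;=\; \binom{k}{2}q^{2} + r(k-1)q + \binom{r}{2},
\]
and the one-step increment $\Delta(n) := B(n) - B(n-1)$ is of order $(k-1)n/k$. The goal is to show simultaneously that $ex(n,\mathscr{F}_{k,t+1}) = B(n)$ and that the balanced blow-up is the unique extremizer.

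\textbf{Base case.} For $n = n_0$, the plan is to bootstrap from Theorem \ref{th2}: since $k \ge 6t+1$, that theorem pins down the extremal structure on any $(k+1)$-vertex induced subdigraph containing a "dense" core as a transitive tournament. I would then add the remaining $n_0 - (k+1) = 4 + \lfloor\log_2 t\rfloor$ vertices one by one, using a walk-counting argument to show that each new vertex must slot into one of exactly $k$ levels determined by the existing transitive structure. The logarithmic slack $\lfloor \log_2 t \rfloor$ appears because the number of length-$k$ walks between a fixed pair of endpoints roughly doubles every time an intermediate level is enlarged past size one; forbidding more than $t$ such walks allows only about $\lfloor \log_2 t \rfloor$ "free" enlargements, which is precisely the slack the hypothesis provides.

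\textbf{Inductive step.} Fix $n > n_0$ and let $D \in EX(n, \mathscr{F}_{k,t+1})$. By extremality $a(D) \ge B(n)$, and for any vertex $v$ the subdigraph $D - v$ is $\mathscr{F}_{k,t+1}$-free, so $a(D-v) \le B(n-1)$ by the induction hypothesis. Averaging, some $v$ satisfies $d^+(v) + d^-(v) \le 2\Delta(n)$, which forces $a(D-v) = B(n-1)$. Hence $D-v$ is extremal on $n-1$ vertices, and induction identifies it as a balanced blow-up of $T_k$ with parts $V_1, \dots, V_k$ of sizes differing by at most one. It remains to show that $v$ attaches as one extra element of some $V_j$, chosen so that the resulting partition on $n$ vertices is again balanced.

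\textbf{Main obstacle.} The attachment step is the crux. If $v$ has an out-neighbor in $V_i$ and an in-neighbor in $V_j$ with $j \ge i$, one constructs a walk of length $k$ through $v$ that violates the natural layering of the blow-up; iterating through the remaining parts (each of size $\approx n/k$) produces a multiplicatively growing number of such walks sharing a common pair of endpoints, and the hypothesis $n \ge k+5+\lfloor\log_2 t\rfloor$ is calibrated so that this number exceeds $t$, contradicting $\mathscr{F}_{k,t+1}$-freeness. The delicate accounting is to show that the degree bound $d^+(v) + d^-(v) \le 2\Delta(n)$ together with the walk constraint leaves only one admissible attachment (up to choice of $V_j$) and that the balance of part sizes is preserved. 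The interplay $k \ge 6t+1$ (controlling how few extraneous walks are tolerated) with $n \ge k+5+\lfloor\log_2 t\rfloor$ (controlling the minimum part size, hence the multiplier in the walk count) is what makes the extremality and uniqueness rigid.
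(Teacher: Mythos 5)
This theorem is not proved in the present paper; it is imported from \cite{lyu21}, whose proof is indeed an induction on $n$ anchored at the bound $z(t)\le 6t+1$, so your high-level strategy is the right one. However, your proposal is an outline rather than a proof, and both of its load-bearing steps have genuine gaps. The most serious is the base case. Theorem \ref{th2} applies only when $k\ge n-1$, i.e.\ $n\le k+1$, and at $n=k+1$ the extremal digraph is the transitive tournament $T_{k+1}$ (with $\binom{k+1}{2}$ arcs and exactly one $k$-walk between its source and sink), which is \emph{not} a balanced blow-up of $T_k$ (that has only $\binom{k+1}{2}-1$ arcs). So the induction cannot be anchored at $n=k+1$, and for $k+2\le n\le k+4+\lfloor\log_2 t\rfloor$ neither Theorem \ref{th2} nor the conclusion of Theorem \ref{th3} describes the extremal digraphs. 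Your plan to ``add the remaining $4+\lfloor\log_2 t\rfloor$ vertices one by one, using a walk-counting argument'' is precisely the content that must be supplied: it is where the transition from $T_{k+1}$ to the blow-up happens and where the threshold $k+5+\lfloor\log_2 t\rfloor$ is actually earned. As written, this step restates the goal rather than proving it.

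The inductive step also contains a non sequitur. From $d^+(v)+d^-(v)\le 2\Delta(n)$ you can only conclude $a(D-v)=a(D)-d(v)\ge B(n)-2\Delta(n)$, which is strictly less than $B(n-1)=B(n)-\Delta(n)$; it does not force $a(D-v)=B(n-1)$. You need a vertex with $d(v)\le a(D)-B(n-1)$, and to get one by averaging you must first show $a(D)\le B(n)$, e.g.\ via $\sum_v a(D-v)=(n-2)\,a(D)\le n\,B(n-1)$ together with an integrality argument --- this is delicate because the average degree of the balanced blow-up equals $\Delta(n)$ only when $k\mid n$ and exceeds it otherwise, so the floors matter. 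Finally, the attachment step (showing $v$ must join a smallest part of the blow-up $D-v$, with all arcs consistent with the layering) is described but not argued: you need an explicit construction of $t+1$ equal-length walks with common endpoints whenever $v$ has an out-neighbour in $V_i$ and an in-neighbour in $V_j$ with $j\ge i$, quantified against the part sizes, and you need the arc count $d(v)=n-1-|V_j|$ to force both extremality and balance. Until the base case and these two steps are carried out, the proposal is a plausible roadmap but not a proof.
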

Motivated by Theorem \ref{th3}, \cite[Theorem 2]{lyu} and \cite[Theorem 1]{HLQ}, we present the conjecture as follows.
\begin{con}\label{c1}
Let $k\ge z(t)$ and let $n$ be sufficiently large. Then $D\in EX(n,\mathscr{F}_{k,t+1})$ if and only if $D$ is a balanced blow-up of the transitive tournament of order $k$.
\end{con}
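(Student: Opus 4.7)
The plan is to prove Conjecture~\ref{c1} by induction on $n$, following the scheme of \cite{lyu21} that yielded Theorem~\ref{th3} but with the base threshold lowered from $k\ge 6t+1$ to $k\ge z(t)$ by feeding in the improved upper bound on $z(t)$ provided by the main theorem of this paper. Fix $t\ge 1$ and $k\ge z(t)$, and write $B_k(n)$ for the size of the balanced blow-up of $T_k$ on $n$ vertices, so $B_k(n)=\frac{(k-1)n^2}{2k}+O(1)$ and this blow-up contains no walk of length $k$ whatsoever. The induction is anchored at $n-1=k$: the definition of $z(t)$ and the main theorem together identify the unique extremum as $T_{k+1}$, which, with an obvious degenerate convention, is a blow-up of $T_k$. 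The phrase ``for $n$ sufficiently large'' in the conjecture is precisely what lets us skip over the short transitional range in which $T_n$ may still be extremal before the balanced blow-up takes over.

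For the inductive step, let $D$ be extremal of order $n$, so $a(D)\ge B_k(n)$. I would first prove a degree-trimming lemma: since $\sum_v(d^+(v)+d^-(v))=2a(D)\gtrsim (k-1)n^2/k$, one can locate a vertex $v$ whose total degree is small enough that the digraph $D'=D-v$ satisfies $a(D')\ge B_k(n-1)$ and is therefore extremal of order $n-1$. The inductive hypothesis then produces a partition of the vertex set of $D'$ into parts $V_1,\dots,V_k$ with $\bigl||V_i|-|V_j|\bigr|\le 1$ such that every arc of $D'$ goes from $V_i$ to $V_j$ for some $i<j$. It then remains to decide how $v$ attaches to $D'$ and to verify that the refined partition of $\mathcal{V}(D)$ is still balanced.

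The main obstacle, and the step whose details will demand the most care, is showing that $v$ must attach cleanly as a new element of one of the parts, in the sense that $N^-(v)=V_1\cup\cdots\cup V_{i-1}$ and $N^+(v)=V_{i+1}\cup\cdots\cup V_k$ for some $i$. Because $D'$ has no walk of length $k$ at all, any deviating arc incident to $v$ instantly creates a dense family of length-$k$ walks joining a fixed pair of vertices, and the counting must show that their multiplicity exceeds $t$. In the range $k\ge 6t+1$ of \cite{lyu21} the sizes $|V_i|\approx n/k$ are generous and these walk counts readily outpace $t$; in the sharper range $k\ge z(t)$, possibly as small as roughly $2t+1$, the available slack scales with $t$ rather than with $k$, so the counting must be stratified, likely according to which pair of parts the deviating arc straddles and how many "correct" parts are available to build intermediate segments of a walk. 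Once $v$ is correctly placed into some $V_i$, a standard swap argument (moving a vertex between the largest and smallest parts strictly increases the arc count whenever the imbalance exceeds one) forces the enlarged partition to be balanced, completing the induction.
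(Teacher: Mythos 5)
This statement is presented in the paper as a \emph{conjecture}: the paper offers no proof of it, and only claims (without details) that the induction of \cite{lyu21} can be pushed down to $k\ge \max\{2t+1,2\lceil\sqrt{2t+9/4}+1/2\rceil+3\}$, i.e.\ to the \emph{upper bound} on $z(t)$ established in Theorem \ref{th1} --- not to $z(t)$ itself. Your proposal does not close the gap between these two thresholds; it essentially restates the known strategy and defers exactly the points where the hypothesis $k\ge z(t)$ is too weak to make that strategy run.

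Two concrete failures. First, the base case. The hypothesis $k\ge z(t)$ gives information only for $n\le k+1$, and there the unique extremal digraph is $T_{k+1}$, which is \emph{not} a blow-up of $T_k$ under any reasonable convention: a blow-up of $T_k$ on $k+1$ vertices has a part of size $2$ with no arc inside it, hence $\binom{k+1}{2}-1$ arcs, strictly fewer than $T_{k+1}$. So the conjectured extremal family does not even achieve the maximum at $n=k+1$, and your induction cannot be anchored there. Saying that ``sufficiently large $n$'' lets you ``skip over the transitional range'' inverts the logic of induction: to start at the first $n_0$ where balanced blow-ups win, you must independently determine $EX(n_0,\mathscr{F}_{k,t+1})$, and nothing in the definition of $z(t)$ or in Theorem \ref{th1} supplies this. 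In \cite{lyu21} the base case at $n=k+5+\lfloor\log_2 t\rfloor$ is exactly where the quantitative hypothesis $k\ge 6t+1$ is consumed. Second, the inductive step. Your argument that a ``deviating'' arc at the new vertex $v$ creates more than $t$ walks of length $k$ between a fixed pair of endpoints is a counting argument whose success depends on a quantitative relationship between $k$ and $t$. Under the bare assumption $k\ge z(t)$ there is no such relationship available: $z(t)$ is only \emph{defined}, not computed, and a priori it could be far smaller than $2t+1$ (already $z(1)=4$ while the general bounds scale linearly in $t$), in which case the walk counts you need could fall short of $t+1$. You acknowledge the counting ``must be stratified'' but do not carry it out; that stratification is the entire mathematical content that would be required, and it is precisely what is missing from the literature --- which is why the statement remains a conjecture.
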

From \cite{HZ1} we get $z(1)=4$. Hence, Conjecture \ref{c1} holds confirmly when $t=1$. In the point of this view, it is important to determine the exact value or a better upper bound of $z(t)$ for each $t$. In this note, we present a new upper bound for $z(t)$ as follows.
\begin{theorem}\label{th1}
let $t$ be a positive integer. Then
\begin{equation}
z(t)\le \max\{2t+1,2\left\lceil \sqrt{2t+9/4}+1/2\right\rceil+3\}.
\end{equation}
\end{theorem}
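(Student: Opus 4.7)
The plan is to prove that any $D\in EX(n,\mathscr{F}_{k,t+1})$ must be the transitive tournament $T_n$ under the stated hypotheses. The transitive tournament is acyclic, so between any two vertices there is at most one walk of any length; hence $T_n$ is $\mathscr{F}_{k,t+1}$-free, giving $ex(n,\mathscr{F}_{k,t+1})\ge a(T_n)=n(n-1)/2$. Since a simple acyclic digraph on $n$ vertices has at most $n(n-1)/2$ arcs, with equality iff it is a transitive tournament, it suffices to prove that an extremal $D$ is acyclic. I would argue this by contradiction: assume $D$ contains a cycle and let $C$ be one of shortest length $p$. In each case I aim to exhibit a pair of vertices $x,y$ joined by at least $t+1$ distinct walks of length $k$, contradicting $\mathscr{F}_{k,t+1}$-freeness.

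The short-cycle cases $p\in\{1,2\}$ would be handled using the bound $k\ge n-1\ge 2t+1$. A loop at a vertex $v$ behaves as a length-$1$ neutral element: combined with the arc-density $a(D)\ge n(n-1)/2$, I would locate some vertex $u$ reachable from $v$ by walks of $t+1$ distinct lengths $\ell_1<\dots<\ell_{t+1}\le k$, each of which extends to a distinct $k$-walk from $v$ to $u$ by prepending $k-\ell_i$ copies of the loop. The case $p=2$ is analogous, with the $2$-cycle serving as a padding-by-two device; the condition $k\ge 2t+1$ provides just enough length flexibility to realize $t+1$ distinct padded walks of length exactly $k$.

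The main case is $p\ge 3$, where I would invoke the bound $n-1\ge 2m+3$ with $m:=\lceil\sqrt{2t+9/4}+1/2\rceil$, the least integer satisfying $\binom{m}{2}\ge t+1$ (a direct computation from $(m-\tfrac12)^2\ge 2t+\tfrac94$). Using the cycle $C$ together with the $n-p\ge 2m+4-p$ vertices off $C$, I would select $m$ auxiliary vertices $w_1,\dots,w_m$ whose common relationship to $C$ allows the following pair-encoding: to each unordered pair $\{w_i,w_j\}$ I associate a distinct walk of length exactly $k$ between two fixed endpoints $x,y$, obtained by traversing $C$ a suitable number of times and "detouring" through the chosen pair. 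This produces at least $\binom{m}{2}\ge t+1$ walks and the desired contradiction. The main obstacle will be carrying out this pair-encoding cleanly and confirming that $n\ge 2m+4$ leaves just enough room for the construction; the very appearance of $\binom{m}{2}$ in the bound strongly suggests that the argument saturates precisely when unordered pairs from an $m$-set are forced to yield distinct $k$-walks, which pins down the Pell-style expression $\sqrt{2t+9/4}+1/2$.
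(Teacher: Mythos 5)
Your high-level target is the right one, and you have correctly decoded the numerology: $m=\left\lceil \sqrt{2t+9/4}+1/2\right\rceil$ is indeed the least integer with $\binom{m}{2}\ge t+1$, and the paper's argument does saturate exactly at a pair-encoding that produces $\binom{m}{2}$ distinct $k$-walks $\cdots\rightarrow w_1\rightarrow t_i\rightarrow t_j\rightarrow t_0$ indexed by pairs $\{i,j\}$ from an $m$-set. But as a proof the proposal has genuine gaps at both of its load-bearing steps. In the cases $p\in\{1,2\}$, the existence of a vertex $u$ reachable from the loop/2-cycle vertex $v$ by walks of $t+1$ distinct lengths $\le k$ is asserted, not derived; it can fail outright (e.g.\ if all non-loop arcs at $v$ point into $v$, then $v$ reaches nothing), and you also do not address the interaction of several short cycles (two 2-cycles through one vertex, a loop adjacent to a 2-cycle, etc.). The paper instead kills these configurations with two lemmas --- joint cycles force $t+1$ walks once $k\ge L\lceil\log_2(t+1)\rceil$, and arc-connected cycles force them once $k\ge tL+1$ (this is where $2t+1$ enters, with $L=2$) --- and then uses a degree count ($d(v)\le n$ for every vertex, hence $d(v)=n$ exactly for the high-degree vertex and $d(w)=n-1$ elsewhere) to conclude that the complement of the short cycle is a tournament with all cross arcs present. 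That tournament structure is precisely the missing prerequisite for your $p\ge 3$ pair-encoding: to run it you need (a) the off-cycle vertices to induce a 3-cycle-free, hence transitive, tournament (ruled in via the joint-3-cycles lemma), (b) an arc between every cycle vertex and every tournament vertex, oriented from the cycle into the tournament (again ruled in by excluding joint 3-cycles), and (c) a cycle vertex with at least $m+1$ successors in the tournament (pigeonhole, which is exactly why the bound is $2m+3$ rather than $m+O(1)$). You flag this as ``the main obstacle,'' and it is: without these structural facts the detour through $\{w_i,w_j\}$ need not exist, let alone give distinct walks.

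One further structural remark: organizing the contradiction by the girth $p$ of $D$ is workable but slightly misaligned with how the density is actually used. A digraph with more than $n(n-1)/2$ arcs automatically contains a loop or a 2-cycle, so the upper bound $ex(n,\mathscr{F}_{k,t+1})\le n(n-1)/2$ only ever needs $p\in\{1,2\}$; the case $p=3$ arises only in the second stage, when classifying the digraphs attaining exactly $n(n-1)/2$ arcs, where the paper invokes the fact that a loopless digraph of that size is either a transitive tournament or has girth at most $3$. Your sketch conflates these two stages; separating them, and supplying the degree-counting and joint-cycle lemmas above, is what is needed to turn the outline into a proof.
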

Adopting the same arguments as in the proofs in \cite{lyu21}(modify a few details in the proof), we may obtain that  Theorem \ref{th3} holds for $k\ge \max\{2t+1,2\left\lceil \sqrt{2t+9/4}+1/2\right\rceil+3\}$, which improves the main result of \cite{lyu21} when $t\ge 2$.

\section{Proof of Theorem \ref{th1}}
We need the following lemmas.

\begin{lemma}\label{le12}Let $n,t$ be positive integers and let $D$ be a digraph of order $n$.
If an $m_1$-cycle $C_{1}$ and an $m_2$-cycle $C_{2}$ in $D$  are joint, then $D$ is not $\mathscr{F}_{k,t+1}$-free for all $k\ge L\lceil\log_{2}(t+1)\rceil,$ where $L$ is the least common multiple of $m_1$ and $m_2$.
\end{lemma}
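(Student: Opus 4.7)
The plan is to exploit the shared vertex of $C_1$ and $C_2$ to construct, for $j=\lceil\log_2(t+1)\rceil$, a family of $2^j$ pairwise distinct closed walks of length $jL$ based at a common vertex, and then to pad them by a short prefix so as to reach any prescribed length $k\ge jL$.

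First I would pick a vertex $v$ lying on both $C_1$ and $C_2$. Traversing $C_1$ from $v$ a total of $L/m_1$ times yields a closed walk $W_1$ of length $L$ from $v$ to $v$; similarly $C_2$ yields $W_2$. The key observation I need is that $W_1\ne W_2$ as vertex sequences. This follows because the infinite periodic sequence obtained by repeatedly writing out the vertices of a simple $m_i$-cycle starting at a fixed point has minimum period exactly $m_i$; so an equality of $W_1$ and $W_2$ on the range $[0,L]$ (which contains at least one full period of each, since $L\ge\max\{m_1,m_2\}$) would force equal minimum periods, hence $m_1=m_2$, and then equal first segments, hence $C_1=C_2$ — contradicting the hypothesis that $C_1$ and $C_2$ are two distinct cycles.

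Next, for each $\epsilon=(\epsilon_1,\ldots,\epsilon_j)\in\{1,2\}^j$ I would define
\[
W_\epsilon := W_{\epsilon_1} W_{\epsilon_2}\cdots W_{\epsilon_j},
\]
a closed walk of length $jL$ at $v$. Two distinct strings $\epsilon\ne\epsilon'$ produce distinct walks: at the first block where they disagree, one concatenation uses $W_1$ and the other uses $W_2$ inside the corresponding $L$-block, and by the previous step these $L$-blocks differ. Hence I obtain $2^j\ge t+1$ pairwise distinct closed walks of length $jL$ at $v$.

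To handle an arbitrary $k\ge jL$, I would write $k=jL+r$ with $r\ge 0$ and let $a$ be the vertex of $C_1$ whose $r$-step forward orbit along $C_1$ lands at $v$ (obtained by following $C_1$ backwards $r$ steps from $v$, possibly winding around several times); let $P$ be the resulting walk of length $r$ from $a$ to $v$. Then the family $\{PW_\epsilon : \epsilon\in\{1,2\}^j\}$ consists of at least $t+1$ pairwise distinct walks of length $k$ from $a$ to $v$, so $D$ contains a copy of $\mathscr{F}_{k,t+1}$. The only real obstacle is the distinctness check for $W_1,W_2$ in the second paragraph; the remaining steps are routine bookkeeping about concatenations.
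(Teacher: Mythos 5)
Your proposal is correct and follows essentially the same route as the paper: in each of $\lceil\log_2(t+1)\rceil$ blocks of length $L$ one chooses between winding $L/m_1$ times around $C_1$ or $L/m_2$ times around $C_2$, yielding $2^{\lceil\log_2(t+1)\rceil}\ge t+1$ distinct closed walks, then padding to length $k$ by a common segment along one of the cycles. Your explicit verification that $W_1\ne W_2$ (via the period argument) is a detail the paper leaves implicit, but it does not change the approach.
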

\begin{proof} Let $a_1=L/m_1$ and $a_2=L/m_2$. Assume $C_{1}$ and $C_{2}$ are joint at vertex $v$. Let $w$ be the walk of length $L\lceil\log_{2}(t+1)\rceil$ from $v$ to $v$ along $C_1$. We partition $w$ into  $\lceil\log_{2}(t+1)\rceil$ walks of the same length from $v$ to $v$, say $w_1,\ldots,w_{\lceil\log_{2}(t+1)\rceil}$. Each of $\{w_1,\ldots,w_{\lceil\log_{2}(t+1)\rceil}\}$  could be replaced by repeating $C_{2}$ $a_2$ times. Therefore, there exist $t+1$ distinct walks of length $L\lceil\log_{2}(t+1)\rceil$ from $v$ to $v$. For $k> L\lceil\log_{2}(t+1)\rceil$, we can  extend these walks along $C_{2}$ to $k$-walks with the same endpoints.
\end{proof}

\begin{lemma}[\cite{HL2}]\label{le1}
Let $n,t$ be positive integers and let $D$ be a digraph of order $n$.
If an $m_1$-cycle $C_{1}$ and an $m_2$-cycle $C_{2}$ 	in $D$  are  connected by an arc, then $D$ is not $\mathscr{F}_{k,t+1}$-free for $k\ge  t L+1.$
\end{lemma}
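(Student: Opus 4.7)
The plan is to exhibit $t+1$ pairwise distinct walks of length $k$ with the same initial and terminal vertex, thereby witnessing a member of $\mathscr{F}_{k,t+1}$ inside $D$.

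Set $d=\gcd(m_1,m_2)$ and $m_i'=m_i/d$ for $i=1,2$, so that $L=m_1 m_2'=m_1' m_2$. By symmetry we may assume the connecting arc points from a vertex $u\in V(C_1)$ to a vertex $v\in V(C_2)$. If $C_1$ and $C_2$ share a vertex then they are joint and Lemma \ref{le12} yields the conclusion already when $k\ge L\lceil\log_2(t+1)\rceil$; since $\lceil\log_2(t+1)\rceil\le t$ for $t\ge 1$, this threshold is at most $tL+1$, so this case is absorbed. Thus we may further assume that $V(C_1)\cap V(C_2)=\emptyset$.

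For each $i\in\{0,1,\ldots,t\}$, define the walk $W_i$ from $u$ as the concatenation of four pieces: (i) $i m_2'$ complete laps of $C_1$ starting at $u$, contributing $iL$ arcs and returning to $u$; (ii) the connecting arc $uv$, contributing $1$ arc and ending at $v$; (iii) $(t-i) m_1'$ complete laps of $C_2$ starting at $v$, contributing $(t-i)L$ arcs and returning to $v$; and (iv) a final run of $k-tL-1$ arcs along $C_2$ from $v$, which is well-defined since $k\ge tL+1$. The total length equals $iL+1+(t-i)L+(k-tL-1)=k$, and each $W_i$ terminates at the vertex reached from $v$ by following $C_2$ for $(k-tL-1)\bmod m_2$ steps, independently of $i$. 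Hence all of $W_0,\ldots,W_t$ share the same initial vertex $u$ and the same terminal vertex.

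For distinctness, observe that for $0\le i<j\le t$ the walk $W_i$ has already crossed $uv$ by time $iL+1$, so at that moment it sits at $v\in V(C_2)$, whereas $W_j$ is still inside piece (i) and hence sits at the out-neighbour of $u$ on $C_1$, a vertex of $V(C_1)$. Vertex-disjointness of $C_1$ and $C_2$ forces these two positions to differ, so $W_i\ne W_j$. The main (minor) obstacle is arranging a valid construction for every residue of $k$ modulo $L$ while preserving distinctness; the residue is absorbed by the tail piece (iv), and distinctness is secured by the preliminary reduction to vertex-disjoint cycles.
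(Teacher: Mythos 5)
The paper itself gives no proof of this lemma (it is quoted from \cite{HL2}), so there is nothing internal to compare against; your argument is correct and is the standard one: vary the number of full laps of $C_1$ (in multiples of $L/m_1$ laps, i.e.\ $L$ arcs) taken before crossing the connecting arc, compensate with laps of $C_2$, pad with a tail of $k-tL-1$ arcs on $C_2$, and use vertex-disjointness at time $iL+1$ to certify distinctness. The reduction of the shared-vertex case to Lemma \ref{le12} via $\lceil\log_2(t+1)\rceil\le t$ is also valid, so the proof is complete.
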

The {\it girth} of a digraph with a cycle is the length of its shortest cycle, and a digraph with no cycle has infinite girth.
\begin{lemma}[\cite{HL2}]\label{le3}
Let $D$ be a loopless digraph of order $n$.  If $a(D)= n(n-1)/2$, then  $D$ is a transitive tournament or
	$$g(D)\le 3.$$
\end{lemma}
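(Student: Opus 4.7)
My plan is to split the argument on whether $D$ contains a pair of opposite arcs. If there exist distinct vertices $u,v$ with both $uv\in\A$ and $vu\in\A$, then $u\to v\to u$ is a $2$-cycle, so $g(D)\le 2\le 3$ and we are done.

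Otherwise every pair of distinct vertices contributes at most one arc to $D$, and since $D$ is loopless this yields the trivial bound $a(D)\le \binom{n}{2}=n(n-1)/2$. The hypothesis forces equality, so exactly one arc joins each pair of distinct vertices; in other words, $D$ is a tournament on $n$ vertices. It now remains to invoke the classical dichotomy that a tournament is transitive if and only if it contains no $3$-cycle: if $D$ is not transitive then it contains a $3$-cycle and $g(D)\le 3$, otherwise $D$ is a transitive tournament, as desired.

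I expect no genuine difficulty here. The only non-trivial input is the classical tournament characterization used at the end, whose standard proof goes by choosing a shortest directed cycle $v_1\to v_2\to\cdots\to v_k\to v_1$ in a non-transitive tournament, assuming $k\ge 4$, and looking at the unique arc between $v_1$ and $v_3$: either orientation of this arc produces a strictly shorter directed cycle (length $k-1$ or length $3$), contradicting minimality and forcing $k=3$. Everything else is routine bookkeeping on the arc count and on the definition of a tournament.
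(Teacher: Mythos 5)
Your proof is correct. The paper itself gives no proof of this lemma (it is quoted from the reference [HL2]), and your argument---dispose of digons first, deduce that $D$ is a tournament from the arc count, then apply the standard fact that a non-transitive tournament contains a $3$-cycle via the shortest-cycle argument---is exactly the standard route one would expect and is sound.
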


Let $D=(\V,\A)$ be a digraph with $l$ loops. Denote by $d(u)$ the number of arcs incident with $u$. We have
\begin{equation*}\label{eqh2}
d(u)=\left\{\begin{array}{ll}
                 d^+(u)+d^-(u)-1,& if ~u\rightarrow u;\\
                 d^+(u)+d^-(u),&\textrm{otherwise}.\end{array}\right.
                 \end{equation*}
Since $a(D)=\sum\limits_{u\in \V}d^+(u)$ and $a(D)=\sum\limits_{u\in \V}d^-(u)$, we have
\begin{equation}\label{e11}
2a(D)=\sum\limits_{u\in \V}d(u)+l,\end{equation}
and $$d(u)=a(D)-a(D[\V\setminus \{u\}])~\text{for~all}~u\in \V.$$
\begin{lemma}\label{le5}Let $D=(\V,\A)$ is $\mathscr{F}_{k,t+1}$-free with $k\ge 2\lceil\log_2(t+1)\rceil$. Then $d(u)\le |\V|$ for all $u\in \V$.
\end{lemma}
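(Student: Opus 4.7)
The plan is to suppose, toward a contradiction, that some vertex $u$ has $d(u)\ge |\V|+1$, and then to locate inside $D$ two joint cycles to which Lemma \ref{le12} applies at the prescribed threshold $k\ge 2\lceil\log_2(t+1)\rceil$. Write $n=|\V|$, let $\ell\in\{0,1\}$ indicate whether the loop $uu$ is present, and set $p=|N^+(u)\setminus\{u\}|$, $q=|N^-(u)\setminus\{u\}|$. Then $d^+(u)=p+\ell$ and $d^-(u)=q+\ell$, so the degree formula from the excerpt gives $d(u)=p+q+\ell$.

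Next I would bound the common neighborhood. Since $N^+(u)\setminus\{u\}$ and $N^-(u)\setminus\{u\}$ both sit inside $\V\setminus\{u\}$, inclusion--exclusion yields
\begin{equation*}
|(N^+(u)\cap N^-(u))\setminus\{u\}|\ \ge\ p+q-(n-1)\ =\ d(u)-\ell-(n-1)\ \ge\ 2-\ell .
\end{equation*}
Now I split into two cases. If $\ell=1$, the bound produces a vertex $v\ne u$ with $uv,vu\in\A$, so the loop at $u$ (a $1$-cycle $C_1$) and the $2$-cycle $u\to v\to u$ (call it $C_2$) are distinct cycles sharing $u$; here $\mathrm{lcm}(1,2)=2$. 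If $\ell=0$, the bound produces two distinct vertices $v_1,v_2\ne u$ with arcs in both directions between each $v_i$ and $u$, yielding two distinct $2$-cycles joint at $u$; here $\mathrm{lcm}(2,2)=2$.

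In either case the lcm $L$ equals $2$, and the hypothesis $k\ge 2\lceil\log_2(t+1)\rceil=L\lceil\log_2(t+1)\rceil$ lets me invoke Lemma \ref{le12}, producing $t+1$ distinct $k$-walks with common endpoints and thereby contradicting the $\mathscr{F}_{k,t+1}$-freeness of $D$. The one subtle point, which I would state carefully, is that the two cycles supplied to Lemma \ref{le12} are genuinely distinct: this is automatic since the inclusion--exclusion count in the $\ell=0$ case delivers two different vertices $v_1\ne v_2$, while in the $\ell=1$ case a loop and a $2$-cycle cannot coincide. I do not anticipate a real technical obstacle beyond this bookkeeping, as the argument is a short inclusion--exclusion coupled directly with the previous lemma.
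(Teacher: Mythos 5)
Your proof is correct and follows essentially the same route as the paper: assume $d(u)\ge|\V|+1$, extract either two joint $2$-cycles or a loop joint with a $2$-cycle at $u$, and apply Lemma \ref{le12} with $L=2$. The paper states the existence of these two joint cycles without computation; your inclusion--exclusion bookkeeping simply makes that step explicit.
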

\begin{proof} Suppose there exists $u\in \V$ such that $d(u)\ge |\V|+1$. It follows that at least two cycles are joint at $u$. Moreover, these 2 cycles are either two 2-cycles or one loop and one 2-cycle. By Lemma \ref{le12}, $D$ is not $\mathscr{F}_{k,t+1}$-free for $k\ge 2\lceil\log_2(t+1)\rceil$, a contradiction.
\end{proof}

\begin{lemma}\label{le13}Let $D=(\V,\A)$ be a digraph and let $C$ and $T$ be disjoint cycle and tournament in $D$, where $|\V(T)|\ge 2\left\lceil \sqrt{2t+9/4}+1/2\right\rceil+1$. If there exists at least one arc between each vertex of $C_1$ and each vertex of $T$, then $D$ is not $\mathscr{F}_{k,t+1}$-free for $k\ge \max\{t+1,3\lceil\log_2(t+1)\rceil\}$.
\end{lemma}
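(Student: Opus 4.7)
The plan is to construct $t+1$ distinct walks of length $k$ with the same pair of endpoints in $D$, which contradicts $\mathscr{F}_{k,t+1}$-freeness. Setting $r=\lceil \sqrt{2t+9/4}+1/2\rceil$, the hypothesis $|V(T)|\ge 2r+1$ will be used together with the crucial numerical identity $\binom{r}{2}\ge t+1$ that is built into the choice of $r$.

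The combinatorial engine is this: since $T$ is a tournament on at least $2r+1$ vertices, averaging produces a vertex $v\in V(T)$ with $d^+_T(v)\ge r$, and any $W\subseteq N^+_T(v)$ with $|W|=r$ has an induced subtournament $T[W]$ carrying exactly $\binom{r}{2}\ge t+1$ arcs. Each arc $w\to w'$ inside $W$ yields a distinct 2-walk $v\to w\to w'$ in $D$, so at least $t+1$ 2-walks from $v$ have been produced, although they may a priori end at different vertices of $W$. To force a common terminal, I would exploit the complete linkage between $C$ and $T$: for each candidate terminal $w'\in W$ and the fixed cycle $C$ there is at least one arc between $w'$ and every vertex of $C$, so one can steer each walk from $w'$ into $C$ at an appropriate vertex and then pad with full laps around $C$ to normalize the length to a common value $\ell$, producing a length-$(2+\ell)$ walk from $v$ to a fixed target $z^{*}$ in $C$ (the choice of entry vertex on $C$ compensates for the varying $w'$). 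If the required $k$ exceeds $2+\ell$, prepend a common walk of length $k-2-\ell$ ending at $v$ via the linkage arcs into $v$, or append one after $z^{*}$ along $C$, so that the resulting $t+1$ walks have length exactly $k$.

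The main obstacle, I expect, is rigorously verifying the padding step when the bipartite structure between $C$ and $T$ is severely restricted: for instance when many arcs between $W$ and $C$ point in the \emph{wrong} direction, preventing a single vertex $w'\in W$ from being sent into $C$ with the necessary freedom. In that regime the proof should split into subcases. When one finds two joint $3$-cycles in $D$ (for example, a $3$-cycle inside $T$ together with a $3$-cycle of the form $c\to c'\to x\to c$ sharing a vertex, available via the linkage when some $x\in T$ lies in $B(c)\cap A(c')$ for an arc $c\to c'$ of $C$), Lemma \ref{le12} with $m_1=m_2=3$ immediately yields the failure of $\mathscr{F}_{k,t+1}$-freeness for $k\ge 3\lceil\log_2(t+1)\rceil$. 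When no such joint pair exists, the bipartite orientation between $C$ and $T$ is so constrained that the sets $A(c),B(c)$ become essentially constant along $C$, and the counting has to be carried inside $T$ itself; here the inequality $\binom{r}{2}\ge t+1$, together with the extra margin $k\ge t+1$ used to absorb walks around $C$, produces the required $t+1$ length-$k$ walks with common endpoints. Balancing these two regimes so that the union of the two lower bounds $k\ge t+1$ and $k\ge 3\lceil \log_2(t+1)\rceil$ covers every case is the technical crux of the proof.
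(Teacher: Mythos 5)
Your proposal assembles the right numerical ingredients (the identity $\binom{r}{2}\ge t+1$ built into $r=\lceil\sqrt{2t+9/4}+1/2\rceil$, a pigeonhole step to find a vertex with many out-neighbours, and Lemma \ref{le12} for joint cycles), but the central difficulty --- forcing the $t+1$ walks to share a common \emph{terminal} vertex --- is exactly the point you leave unresolved, and your proposed fix does not work. You count the $\binom{r}{2}$ two-walks $v\rightarrow w\rightarrow w'$ inside $T$ and then try to normalise their endpoints by steering each $w'$ into $C$ and padding with laps. This fails for two concrete reasons: (i) the hypothesis only guarantees \emph{some} arc between $w'$ and each vertex of $C$, with no control on its direction, so $w'$ may have no out-arc into $C$ at all; and (ii) even when every $w'$ can enter $C$, laps around $C$ change a walk's length only by multiples of $|C|$, so walks reaching a fixed $z^{*}\in C$ after lengths that are incongruent modulo $|C|$ can never be equalised. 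There is the same problem at the start: your walks begin at $v\in\V(T)$, and to stretch them to length $k$ you must prepend a walk from $C$ into $v$, which again needs an arc of $C\rightarrow v$ type that may not exist.

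The paper resolves both issues differently. First it proves that $T$ is \emph{transitive}: if $T$ contained a $3$-cycle $C_1$, then either some arc from $C_1$ back to $C$ creates two joint $3$-cycles (Lemma \ref{le12}), or else every arc between $C$ and $C_1$ points from $C$ into $C_1$, in which case one builds $t+1$ distinct $k$-walks from $w_1\in\V(C)$ to $u_1\in\V(C_1)$ by splitting the $k$ steps between $C$ and $C_1$ in $t+1$ different ways --- this is precisely where the hypothesis $k\ge t+1$ is consumed, a case your dichotomy (joint $3$-cycles exist or not) does not cover. With $T$ transitive, the pigeonhole is applied to a vertex $w_1$ \emph{of the cycle $C$} (so it has either $a+1$ successors or $a+1$ predecessors in $T$, and the walk can be prefixed by laps of $C$ to reach length $k$), and the counting uses $3$-walks $w_1\rightarrow t_i\rightarrow t_j\rightarrow t_0$ where $t_0$ is the local sink of the chosen $a+1$ successors: transitivity hands you the common terminal for free, which is the step your construction is missing. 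As written, your argument has a genuine gap at this normalisation step.
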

\begin{proof} To the contrary, suppose $D$ is $\mathscr{F}_{k,t+1}$-free for  $k\ge \max\{t+1,3\lceil\log_2(t+1)\rceil\}$. Suppose $T$ contains a 3-cycle $C_1$ as its subdigraph. Let
$$ C\equiv w_1\rightarrow \dots\rightarrow  w_l\rightarrow w_1 ~\text{and}~ C_1\equiv u_1\rightarrow u_2\rightarrow u_3\rightarrow u_1.$$
Without loss of generality, we assume $w_1\rightarrow u_1$. If $u_2\rightarrow w_1$, we obtain a 3-cycle $u_2\rightarrow w_1\rightarrow u_1\rightarrow u_2$. Since two 3-cycles are joint, by Lemma \ref{le12} we obtain $D$ is not $\mathscr{F}_{k,t+1}$-free, a contradiction. Hence, $w_1\rightarrow u_2$. Similarly, $w_1\rightarrow u_3$. If there exists some $i$ such that $u_i\rightarrow w_l$, we obtain $u_i\rightarrow w_l\rightarrow w_1\rightarrow u_i$. Then two 3-cycles are joint. By Lemma \ref{le12}, $D$ is not $\mathscr{F}_{k,t+1}$-free, a contradiction. Hence $w_l\rightarrow u_i$ for $i\in \{1,2,3\}$. Repeate the above arguments, we have
\begin{equation}\label{e6}
w_i\rightarrow u_j\quad \text{for}~ i\in \{1,2,\ldots,l\} ~\text{and}~ j\in\{1,2,3\}.
\end{equation}

We construct walks of length $k$ from $w_1$ to $u_1$ in the following way. For each $t_1\in \{0,1,\ldots, t\}$,  there are a walk of length $t_1$ with its initial vertex $w_1$ along $C$, say $W_{t_1}$, and
a walk of length $k-t_1-1$ with terminal vertex $u_1$ along $C_1$, say $W'_{t_1}$.  Since (\ref{e6}), $W_{t_1}W'_{t_1}$ is a walk of length $k$ with initial vertex $w_1$ and terminal vertex $u_1$. Then there exist $t+1$ distinct walks of length $k$ from $w_1$ to $u_1$, a contradiction. Hence $T$ contains no 3-cycles. Combining this with Lemma \ref{le3},  $T$ is acyclic, and hence it is transitive. Let $a=\left\lceil \sqrt{2t+9/4}+1/2\right\rceil$. Since $|\V(T)|\ge 2a+1$, without loss of generality, we assume $w_1$ has at least $a+1$ successors in $T$. Let those successors be $\{t_0,t_1,t_2,\ldots,t_a\}$ with $t_i\rightarrow t_0$ for $i=\{1,2,\ldots, a\}$. For any pair $i,j\in \{1,2,\ldots, a\}$ with $i<j$, we have  $\cdots\rightarrow w_1\rightarrow t_i\rightarrow t_j\rightarrow t_0$. Since $a(a-1)/2\ge t+1$, then there are more than $t$ walks of length at least 3, a contradiction.
\end{proof}

Now we are ready to give the proof of Theorem \ref{th1}.

\noindent {\bf Proof of Theorem \ref{th1}}.
Let $a=2\left\lceil \sqrt{2t+9/4}+1/2\right\rceil+1$ and  let $n'\ge  a+2$. It is easily seen that the transitive tournament of order $n'$ is in $EX(n',\mathscr{F}_{k,t+1})$. Hence,
\begin{equation}
ex(n',\mathscr{F}_{k,t+1})\ge \frac{n'(n'-1)}{2}
\end{equation}
 First we prove that
\begin{equation}\label{e2}
ex(n',\mathscr{F}_{k,t+1})= \frac{n'(n'-1)}{2}.
 \end{equation}
Suppose otherwise that  $D$ is $\mathscr{F}_{k,t+1}$-free on $n'$ vertices with $l$ loops and
\begin{equation}\label{e1}a(D)\ge \frac{n'(n'-1)}{2}+1.
 \end{equation}
By the pigeonhole principle, there exists some $v$ such that $d(v)\ge n'$. Combining this with Lemma \ref{le5}, we have
\begin{equation}\label{e4}
d(v)= n'.
\end{equation}We distinguish the following two cases.

{\it Case 1.} $vv\notin \A$. Then there exists $u\in \V\setminus \{v\}$ such that $vu,uv\in \A$. By Lemma \ref{le12}, two $2$-cycles can not be joint. Hence $v$ is on exactly one 2-cycle. Combining this with (\ref{e4}), each vertex in $\V\setminus \{u,v\}$ is jointed with $v$ by exactly one arc.  By Lemma \ref{le1}, $D-v$ has no 2-cycles or loops, which implies that $d(w)\le n'-1$ for all $w\in \V\setminus \{v,u\}$. By Lemma \ref{le5} and (\ref{e1}), we obtain $d(u)=n'$ and
\begin{equation*}\label{e5}
d(w)= n'-1{\rm~ for~ all~} w\in \V\setminus \{v,u\}.
\end{equation*}
Hence, $D[\V\setminus \{u,v\}]$ is a tournament. Moreover, each vertex in $\V\setminus \{u,v\}$ is jointed with each of  $\{u,v\}$ by exactly one arc. Since $|\V\setminus \{u,v\}|\ge a$, by Lemma \ref{le13}, $D$ is not $\mathscr{F}_{k,t+1}$, a contradiction.

{\it Case 2.} $vv\in \A$. By Lemma \ref{le1}, $v$ is jointed by exactly one arc with each vertex in $\V$. Moreover, each vertex in $\V\setminus \{v\}$ is not on a loop or a 2-cycle. It follows from (\ref{e1}) and  (\ref{e4}) that $D[\V\setminus \{v\}]$ is a tournament. By Lemma \ref{le13}, $D$ is not $\mathscr{F}_{k,t+1}$-free, a contradiction. Now we get (\ref{e2}).

\indent Now we characterize the structures of the digraphs in $EX(n,\mathscr{F}_{k,t+1})$. Let $D\in EX(n,\mathscr{F}_{k,t+1})$. First we show that $D$ contains no loops. By (\ref{e2}), we obtain that $$a(D)=\frac{n(n-1)}{2}~\text{and}~a(D[\V\setminus \{u\}])\le (n-1)(n-2)/2.$$ Combining this with the definition of $d(u)$, we get $d(u)\ge n-1$ for all $u\in \V$. Recalling (\ref{e11}), $D$ is loopless. Moreover,
\begin{equation}\label{e3}d(u)= n-1~{\rm for~ all~} u\in \V.
\end{equation}

Suppose $D$ contains a 2-cycle $u\rightarrow v\rightarrow u$. By Lemma \ref{le12}, $v$ can not be on two distinct 2-cycles. Hence, it is joined with $n-2$ distinct vertices. Let $v_0\in \V$ such that there is no arc between $v$ and $v_0$. From (\ref{e3}) $v_0$ is on a 2-cycle, say $v_0\rightarrow v_1\rightarrow v_0$. Obviously, $v_1$ is joined with $v$. By Lemma \ref{le1}, $D$ is not $\mathscr{F}_{k,t+1}$-free, a contradiction. Hence, $D$ contains no 2-cycles. Recalling (\ref{e3}), $D$ is a tournament.

Suppose $D$ contains a 3-cycle $u\rightarrow v\rightarrow w\rightarrow u$. Note that $D[\V\setminus \{u,v,w\}]$ is also a tournament. Since $n-3\ge a$, by Lemma \ref{le13}, $D$ is not $\mathscr{F}_{k,t+1}$-free, a contradiction. It follows from Lemma \ref{le3} that $D$ is a transitive tournament.

Conversely, it is easily seen that  the transitive tournament of order $n$ is in $EX(n,\mathscr{F}_{k,t+1})$. This completes the proof.
\qed







\end{document}